\newtheorem{thm}{Theorem}[section]
\newtheorem{lem}[thm]{Lemma}
\newtheorem{prop}[thm]{Proposition}
\theoremstyle{definition}
\newtheorem{rem}[thm]{Remark}
\numberwithin{equation}{section}
\begin{document}

\title[Less than one implies zero]{Less than one implies zero}

\author[F. L. Schwenninger]{Felix L. Schwenninger}
\address{Dept. of Applied Mathematics, \\ University of Twente, P.O. Box 217, \\ 7500 AE Enschede, The Netherlands}
\email{f.l.schwenninger@utwente.nl}
\thanks{The first author has been supported by the Netherlands Organisation for Scientific Research (NWO), grant no.\ 613.001.004.}
\author[H. Zwart]{Hans Zwart}
\address{Dept. of Applied Mathematics, \\ University of Twente, P.O. Box 217, \\ 7500 AE Enschede, The Netherlands}
 \email{h.j.zwart@utwente.nl}

\date{13 February 2015}

\begin{abstract}
In this paper we show that from the estimate $\sup_{t \geq 0}\|C(t) - \cos(at)I\| <1$ we can conclude that $C(t)$ equals $\cos(at) I$. Here $\left(C(t)\right)_{t \geq 0}$ is a strongly continuous cosine family on a Banach space. 
 \end{abstract}

\subjclass[2010]{Primary 47D09; Secondary 47D06}

\keywords{Cosine families, Operator cosine functions, Zero-One-law}

\maketitle

\section{Introduction}

Let $\left( T(t) \right)_{t \geq 0}$ denote a strongly continuous semigroup on the Banach space $X$ with infinitesimal generator $A$. 
It is well-known that the inequality
\begin{equation}\label{1-0law}
\limsup_{t\to0^{+}}\|T(t) - I\| <1,
\end{equation}
implies that the generator $A$ is a bounded operator, see e.g.\ \cite[Remark 3.1.4]{Staffans}.  That the stronger assumption of having
\begin{equation}
  \label{eq:1}
  r:=\sup_{t\geq0}\|T(t) - I\| <1,
\end{equation}
implies that $T(t)=I$ for all $t\geq 0$ seems not to be equally well-known among researchers working in the area of strongly continuous semigroup. The result was proved in the sixties, see e.g.\ Wallen \cite{Wallen} and Hirschfeld \cite{Hirschfeld}. We refer the reader to \cite[Lemma 10]{BobrowskiApprox} for a more detailed listing of related references. 
In this paper we investigate a similar question for cosine families $\left( C(t) \right)_{t \geq 0}$. Recently,  Bobrowski and Chojnacki showed in \cite[Theorem 4]{BobrowskiApprox}  that 
\begin{equation}
\label{eq:1a}
  \sup_{t\geq0}\|C(t)-\cos(at)I\| < \frac{1}{2},
\end{equation}
implies $C(t) = \cos(at)I$ for all $t \geq 0$. They used this to conclude that scalar cosine families are isolated points within the space of bounded strongly continuous cosine families acting on a fixed Banach space, equipped with the supremum norm. \\
The purpose of this note is to extend the result of \cite{BobrowskiApprox} by showing that the half in (\ref{eq:1a}) may be replaced by one. 
More precisely, we prove the following.
\begin{thm}
\label{mainresult}
Let $\left(C(t)\right)_{t\geq 0}$ be a strongly continuous cosine family on the Banach space $X$ and let $a \geq 0$.
  If the following inequality holds for $r<1$,
\begin{equation}
  \label{eq:10a1}
  	\sup_{t\geq0}\|C(t)- \cos(at) I  \| < r,
\end{equation}
then $C(t)=\cos(at)I$.
\end{thm}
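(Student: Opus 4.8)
The plan is to reduce the statement to a spectral assertion about the generator. Throughout I would write $D(t):=C(t)-\cos(at)I$, so that $\|D(t)\|\le r_0:=\sup_{t\ge0}\|C(t)-\cos(at)I\|\le r<1$; in particular $C$ is then a \emph{bounded} cosine family, $\|C(t)\|\le 1+r_0$, and all of the operators $C(s),C(t),D(s),D(t)$ commute, since $2C(s)C(t)=C(s+t)+C(|s-t|)$. The first task is to gain regularity: because $\cos(at)\to1$ as $t\to0^+$, the hypothesis yields $\limsup_{t\to0^+}\|C(t)-I\|\le r_0<1$, and the cosine-family analogue of the result quoted in the introduction for semigroups should force the generator $A$ to be bounded. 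Consequently $C(t)=\sum_{n\ge0}\frac{t^{2n}}{(2n)!}A^{n}=\cos\!\big(t\sqrt{-A}\big)$ extends to an entire $B(X)$-valued function, where, since the generator of a bounded cosine family has $\sigma(A)\subseteq(-\infty,0]$, the operator $B:=\sqrt{-A}$ is well defined by the holomorphic functional calculus with $\sigma(B)\subseteq[0,\infty)$.

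Next I would pin down $\sigma(B)$. By the spectral mapping theorem for the entire function $z\mapsto\cos(tz)$ one has $\sigma(C(t))=\cos\!\big(t\,\sigma(B)\big)$, whence for every $t$, $r_0\ge\|D(t)\|\ge\operatorname{spr}(D(t))=\sup_{\beta\in\sigma(B)}|\cos(t\beta)-\cos(at)|$. Thus each $\beta\in\sigma(B)$ satisfies $\sup_{t\ge0}|\cos(t\beta)-\cos(at)|\le r_0<1$. A short elementary lemma then isolates $\beta$: if $\beta$ is non-real, $t\mapsto\cos(t\beta)$ is unbounded on $\mathbb{R}$; while if $\beta$ is real with $\beta\ne\pm a$, the identity $\cos(t\beta)-\cos(at)=-2\sin\frac{(\beta+a)t}{2}\sin\frac{(\beta-a)t}{2}$ lets one choose $t$ making both factors close to $\pm1$, so the supremum is at least $1$. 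Hence $\sigma(B)=\{a\}$ (respectively $\{0\}$ when $a=0$), i.e. $\sigma(A)=\{-a^2\}$. This is the step at which the constant $1$ genuinely enters.

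Finally, writing $B=aI+N$ with $N$ quasinilpotent and commuting with everything, I would show that $N$ must be harmless. Expanding gives $D(t)=\cos(at)\big(\cos(tN)-I\big)-\sin(at)\sin(tN)$, so the remaining goal is to deduce $\cos(tN)\equiv I$ and $\sin(tN)\equiv0$ from the single uniform bound $\|D(t)\|\le r_0$. I expect \emph{this} to be the main obstacle: the spectral argument of the previous paragraph only controls $\operatorname{spr}(D(t))$ and is entirely blind to the quasinilpotent part, so one must exploit the global estimate over all $t$ a second time. When $N$ is nilpotent this is painless — $D(t)$ is then an operator polynomial in $t$ multiplied by oscillating scalars, and a nonzero leading coefficient would make $\|D(t)\|$ grow like a positive power of $t$ along a sequence $t_n\to\infty$ chosen with $\cos(at_n)\to1$, contradicting boundedness. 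The genuinely quasinilpotent case, however, needs a Gelfand--Hille type rigidity argument for bounded cosine families; I would carry this out (or invoke it) to conclude $D\equiv0$, that is, $C(t)=\cos(at)I$ for all $t\ge0$.
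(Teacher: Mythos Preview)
Your outline correctly isolates the two natural steps --- first pin down $\sigma(A)=\{-a^{2}\}$, then upgrade this to $A=-a^{2}I$ --- and your treatment of the first step is essentially the paper's Lemma~2.1 together with Lemma~2.2. (Your product formula $-2\sin\frac{(\beta+a)t}{2}\sin\frac{(\beta-a)t}{2}$ is a perfectly good substitute for the paper's explicit choice of $t$.) The preliminary reduction to bounded $A$ is also legitimate, but note that the ``cosine-family analogue'' you invoke is not the trivial semigroup fact: it is the zero--two law $\limsup_{t\to0^{+}}\|C(t)-I\|<2\Rightarrow A$ bounded, proved in the companion paper \cite{SchwenningerZwartZeroTwo}. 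The present paper does \emph{not} use this reduction.

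The genuine gap is the second step. You write $B=aI+N$ with $N$ quasinilpotent and observe correctly that the spectral argument is blind to $N$; you dispatch the nilpotent case, and then say only that the general case ``needs a Gelfand--Hille type rigidity argument \ldots\ I would carry this out (or invoke it)''. This is exactly the content of the theorem, and it is not supplied. A direct Gelfand-type argument is not available off the shelf: from boundedness of $C(t)$ one does not get a bounded group $e^{itB}$ to which Gelfand's theorem applies, and passing to the Kisy\'nski phase-space group leaves you with a two-point spectrum $\{\pm ia\}$, so further work (spectral projections with uniform control, or an Arveson-spectrum argument) would still be needed. None of this is sketched.

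The paper bypasses the quasinilpotent issue entirely by a different mechanism. It tests the uniform bound against the family $h_{n}(q,t)=e^{-qt}\cos(t)^{2n+1}$ in the inequality
\[
\Bigl\|\int_{0}^{\infty}h_{n}(q,t)\,e(t)\,dt\Bigr\|\le r\int_{0}^{\infty}|h_{n}(q,t)|\,dt,
\]
computes the left side in closed form via the Laplace transform $E(s)=s(s^{2}I-A)^{-1}-\frac{s}{s^{2}+1}I$ (Proposition~2.4), and chooses $n$ so that the ratio of Fourier constants $a_{1,2n+1}/(2b_{2n+1})$ is close enough to $1$ (Lemma~2.5) that, after letting $q\to0^{+}$, the resulting operator inequality forces $(I+A)(A-I)^{-1}(I+A)=0$ and hence $A=-I$. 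This argument uses the hypothesis $r<1$ a second time in a quantitative way and never needs $A$ bounded a priori, nor any Gelfand--Hille machinery. That is precisely the idea your proposal is missing.
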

Between the first draft\footnote{see F.~Schwenninger, H.~Zwart, \textit{Less than one, implies zero}, \url{http://arxiv.org/abs/1310.6202v1.pdf}.} and this version of the manuscript, Chojnacki showed in \cite{ChojnackiOneTwo} that Theorem \ref{mainresult} even holds for cosine families on normed algebras indexed by general abelian groups and without assuming strong continuity. Furthermore,  Bobrowski, Chojnacki and Gregosiewicz \cite{BobrowskiZeroTwo} and, independently, Esterle \cite{Esterle2015} extended Theorem \ref{mainresult} to $r<\frac{8}{3\sqrt{3}}\approx 1.54$. This is optimal as $\sup_{t\geq0}|\cos(3t)-\cos(t)|=\frac{8}{3\sqrt{3}}$.
Again, their results do not require the strong continuity assumption and hold for cosine families on general normed algebras with a unity element.\\
Let us remark that the case $a=0$ is special. In a \textit{three-line-proof} \cite{UlmerSeminare2012}, Arendt showed that $r<\frac{3}{2}$ still implies that $C(t)=I$ for all $t\geq0$ then. In \cite{SchwenningerZwartZeroTwo}, we proved that for $\left(C(t)\right)_{t\geq 0}$ strongly continuous,
\begin{equation}\label{eq:ZeroTwoSup}
	\sup_{t\geq0}\|C(t)-I\|<2\ \implies C(t)=I\ \forall t\geq0.
\end{equation}
Moreover, we were able to show the following \textit{zero-two law},
\begin{equation}\label{eq:ZeroTwo}
	\limsup_{t\to0^{+}}\|C(t)-I\|<2\ \implies \lim_{t\geq0}\|C(t)-I\|=0,
\end{equation}
which can be seen as the cosine families version of (\ref{1-0law}). Recently, Chojnacki \cite{ChojnackiZeroTwo} also extended (\ref{eq:ZeroTwoSup}) and  (\ref{eq:ZeroTwo}), allowing for, not necessarily strongly continuous, cosine familes on general normed algebras with a unity element.\\
In the next section we prove Theorem \ref{mainresult} for $a\neq0$ using elementary techniques, which seem to be less involved than the technique used in \cite{BobrowskiApprox}. As mentioned, the case $a=0$ can be found in \cite{SchwenningerZwartZeroTwo}, see also \cite{BobrowskiApprox,BobrowskiZeroTwo,ChojnackiZeroTwo,ChojnackiOneTwo}.
%As will become clear, the proof of the second item in Theorem \ref{mainresult} is elementary once the spectrum of  the infinitesimal generator is characterized. The proof of the first item is longer, but only 
% relies on elementary techniques. T
%
%The case $a=0$ can be considered similar as in the semigroup case before. Moreover, it is also included in \cite[eq. (4.1)]{BobrowskiApprox}, where it is in turn a consequence of the semigroup result.

\section{Proof of Theorem \ref{mainresult}}
Let $\left(C(t)\right)_{t\geq 0}$ be a strongly continuous cosine family on the Banach space $X$ with infinitesimal generator of $A$ with domain $D(A)$.  For an introduction to cosine families we refer to e.g.\ \cite{ABHN, Fattorini69I}. Assume that for some $r>0$
\begin{equation}
  \label{eq:10a}
  	\sup_{t\geq0}\|C(t)- \cos(at) I  \| = r.
\end{equation}
If $a>0$ we may apply scaling on $t$. Hence in that situation, we can take without loss of generality $a=1$, thus
\begin{equation}\label{eq:10}
 	 \sup_{t\geq0}\|C(t)- \cos(t) I \|= r.
\end{equation}
%In the proofs we will often work with this equation. 
The following lemma is essential in proving Theorem \ref{mainresult}.
\begin{lem}\label{le:spectrum}
	Let $\left(C(t) \right)_{t\geq 0}$ be a cosine family such that (\ref{eq:10a}) holds for $r<1$ and $a\geq0$. Then, the spectrum of its generator $A$ satisfies $\sigma(A)\subseteq\left\{-a^{2}\right\}$.
\end{lem}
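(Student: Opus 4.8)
The plan is to transfer the operator estimate (\ref{eq:10a}) into a scalar estimate between two cosines, one of frequency $a$ and one governed by a spectral value of $A$, and then to rule out every frequency except $a$ by a mean-value computation. First I would record that the family is bounded, since $\|C(t)\|\le|\cos(at)|+r\le 1+r$; consequently the Laplace-transform relation $\lambda(\lambda^{2}-A)^{-1}=\int_{0}^{\infty}e^{-\lambda t}C(t)\,dt$ converges for $\operatorname{Re}\lambda>0$, and because $\lambda\mapsto\lambda^{2}$ maps the open right half-plane onto $\mathbb{C}\setminus(-\infty,0]$ this already yields $\sigma(A)\subseteq(-\infty,0]$. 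As this set has empty interior in $\mathbb{C}$, every $\mu_{0}\in\sigma(A)$ is a boundary point of the spectrum and hence an approximate eigenvalue: there are $x_{n}\in D(A)$ with $\|x_{n}\|=1$ and $(A-\mu_{0})x_{n}\to0$.

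Writing $\mu_{0}=-\omega^{2}$ with $\omega\ge0$, the next step is to show $C(t)x_{n}\to\cos(\omega t)x_{n}$ for each fixed $t$. For this I would use the Duhamel-type identity $C(t)x-\cos(\omega t)x=\int_{0}^{t}\tfrac{\sin(\omega(t-\tau))}{\omega}\,C(\tau)(A+\omega^{2})x\,d\tau$, valid for $x\in D(A)$ (with $\tfrac{\sin(\omega(t-\tau))}{\omega}$ read as $t-\tau$ when $\omega=0$), which follows by checking that both sides solve $u''+\omega^{2}u=(A+\omega^{2})C(t)x$ with vanishing data at $t=0$. Applying it to $x=x_{n}$ and using the boundedness of $C$ gives $\|C(t)x_{n}-\cos(\omega t)x_{n}\|\le \tfrac{t}{\omega}\sup_{\tau}\|C(\tau)\|\,\|(A+\omega^{2})x_{n}\|\to0$. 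Combined with $\|(C(t)-\cos(at)I)x_{n}\|\le r$, letting $n\to\infty$ produces the scalar bound $|\cos(\omega t)-\cos(at)|\le r<1$ for every $t\ge0$.

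The crux is then the purely scalar claim: if $\omega,a\ge0$ and $\sup_{t\ge0}|\cos(\omega t)-\cos(at)|<1$, then $\omega=a$. I would prove this by averaging rather than by any pointwise optimisation. Since $(\cos(\omega t)-\cos(at))^{2}\le r^{2}<1$ holds pointwise, the Ces\`aro mean $\lim_{T\to\infty}\tfrac1T\int_{0}^{T}(\cos(\omega t)-\cos(at))^{2}\,dt$ is at most $r^{2}<1$. On the other hand, expanding via $\cos(\omega t)\cos(at)=\tfrac12(\cos((\omega+a)t)+\cos((\omega-a)t))$ shows that, as long as $\omega\ne a$, the cross term averages to $0$ while each square averages to $\tfrac12$ (or to $1$ when the frequency is $0$), so the mean is at least $1$. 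This contradiction forces $\omega=a$, that is $\mu_{0}=-a^{2}$; as $\mu_{0}\in\sigma(A)$ was arbitrary, $\sigma(A)\subseteq\{-a^{2}\}$.

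I expect the averaging step to be the decisive point, and also the reason the threshold $1$ appears so cleanly: the orthogonality of cosines of distinct frequencies delivers the value $1$ for the mean square at once, with no case distinction between rational and irrational frequency ratios. By contrast, the sharp constant $\tfrac{8}{3\sqrt3}$ mentioned after Theorem \ref{mainresult} reflects the true pointwise supremum and cannot be reached by a mean-square estimate; obtaining it would require the finer pointwise analysis used in the cited extensions. The spectral reduction of the first two paragraphs is standard cosine-family theory, so the only genuinely new ingredient is the elementary mean-value argument.
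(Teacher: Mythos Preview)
Your argument is correct and structurally mirrors the paper's: both first localise $\sigma(A)$ to $(-\infty,0]$ via boundedness, then identify $\sigma(A)$ with $\sigma_{ap}(A)$, then pass from an approximate eigenvalue $-\omega^{2}$ to the scalar inequality $|\cos(\omega t)-\cos(at)|\le r$ for all $t$, and finally rule out $\omega\ne a$. Your Duhamel identity is the $\lambda=i\omega$ specialisation of the Nagy identity the paper quotes, so the spectral-mapping step is essentially the same. The genuine divergence is in the scalar step: the paper proves Lemma~\ref{le:cos} pointwise, by a Diophantine argument producing a single $s_{0}$ with $|\cos(\omega s_{0})-\cos(as_{0})|\ge 1$, whereas you average and exploit the $L^{2}$-orthogonality of cosines of distinct frequencies to get a mean square equal to $1$ (or $\tfrac32$ if one frequency vanishes). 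Your route is shorter and avoids any case analysis on the ratio $\omega/a$; the paper's route, on the other hand, yields the strict pointwise inequality $\sup_{t}|\cos(\omega t)-\cos(at)|>1$, which is slightly more information than the mean-square argument delivers, though not needed for the present lemma.
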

\begin{proof}
The case $r=0$ is trivial, thus let $r>0$.
\mbox{}From (\ref{eq:10a}) it follows in particular that the cosine family $\left(C(t)\right)_{t\geq 0}$ is bounded. Using Lemma 5.4 from \cite{Fattorini69I} we conclude that for every $s \in {\mathbb C}$ with positive real part $s^{2}$ lies in the resolvent set of $A$, i.e., $s^2 \in\rho(A)$. Thus the spectrum of $A$ lies on the non-positive real axis.
\newline To determine the spectrum, we use the following identity, see \cite[Lemma 4]{Nagy74Sz}. For $\lambda\in\mathbb{C}$, $s\in\mathbb{R}$ and $x\in D(A)$ there holds
\begin{equation*}
	\frac{1}{\lambda}\int_{0}^{s}\sinh(t-s)C(t)(\lambda^{2}I-A)x \ dt = (\cosh(\lambda s)I- C(s))x.
\end{equation*}
By this and the definition of the approximate point spectrum,
\begin{equation*}
	\sigma_{ap}(A)=\left\{\lambda\in\mathbb{C}\mid \exists(x_{n})_{n\in\mathbb{N}}\subset D(A),\|x_{n}\|=1,\lim_{n\to\infty}\|(A-\lambda I)x_{n}\|=0\right\},
\end{equation*}
it follows that if $\lambda^{2}\in\sigma_{ap}(A)$, then $\cosh(\lambda s)\in\sigma_{ap}(C(s))$. Hence,
\begin{equation}\label{eq:factNagy}
	\cosh\left(s\sqrt{\sigma_{ap}(A)}\right)\subset \sigma_{ap}(C(s)),\qquad \forall s\in\mathbb{R}.
\end{equation}
Since $\sigma(A)\subset\mathbb{R}_{0}^{-}$, the boundary of the spectrum equals the spectrum. Combining this with the fact that the boundary of the spectrum is contained in the approximate point spectrum, we see that $\sigma(A)=\sigma_{ap}(A)$. Let $-\lambda^{2}\in\sigma(A)$ for $\lambda\geq0$. Then, by (\ref{eq:factNagy}),
\begin{equation*}
	\cosh(\pm si\lambda)=\cos(s\lambda)\in\sigma_{ap}(C(s)), \qquad \forall s\in\mathbb{R}.
\end{equation*}
If $\lambda\neq a$, we can find $s_{0} > 0$ such that $|\cos(s_{0}\lambda)-\cos(as_{0})|\geq 1$, see Lemma \ref{le:cos}. Since $\cos(s_{0}\lambda)\in\sigma_{ap}(C(s_{0}))$, we find a sequence $(x_{n})_{n\in\mathbb{N}}\subset X$ such that $\|x_{n}\|=1$ and $\lim_{n\to\infty}\|(C(s_{0})-\cos(s_0\lambda))x_{n}\|=0$. Therefore,
\begin{equation*}
	\|(C(s_{0})-\cos(as_{0}))x_{n}\|\geq |\cos(s_{0}\lambda)-\cos(as_{0})|-\|\left(C(s_{0})-\cos(s_{0}\lambda)\right)x_{n}\|.
\end{equation*}	
Thus $\|C(s_{0})-\cos(as_{0})\| \geq 1$. This contradicts assumption (\ref{eq:10a}) as $r<1$.
\end{proof}
\begin{lem}\label{le:cos}
If $a,b\geq0$ and $a\neq b$, then $\sup_{t\geq0}|\cos(at)-\cos(bt)|>1$.
\end{lem}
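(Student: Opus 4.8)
The plan is to reduce everything to producing, for a single well-chosen $t_0>0$, the strict inequality $|\cos(at_0)-\cos(bt_0)|>1$, since this value is dominated by the supremum. By symmetry of the expression in $a$ and $b$, I assume throughout that $a>b\geq0$, so in particular $a>0$. First I dispose of the case $b=0$: then $\cos(bt)\equiv1$, and choosing $t_0=\pi/a$ gives $\cos(at_0)=-1$, hence $|\cos(at_0)-1|=2>1$.

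Now assume $b>0$. The key observation is that at the points $t_n=2\pi n/b$ ($n\in\mathbb{N}$) one has $\cos(bt_n)=1$ and $\cos(at_n)=\cos(2\pi n a/b)$, so that
\[
|\cos(at_n)-\cos(bt_n)|=1-\cos(2\pi n a/b).
\]
Thus it suffices to find $n$ with $\cos(2\pi n a/b)<0$, i.e.\ with the fractional part of $na/b$ lying in $(1/4,3/4)$. I claim such $n$ exists whenever $a/b$ is not an integer. If $a/b$ is irrational this follows from the density of $\{na/b \bmod 1\}$ in $[0,1)$ (Weyl/Kronecker). If $a/b=p/q$ in lowest terms with $q\geq2$, then $\{na/b\bmod 1\}$ runs through all of $\{0,1/q,\dots,(q-1)/q\}$ since $p$ is invertible modulo $q$, and one checks directly that this set meets $(1/4,3/4)$ (take the residue $k/q$ with $k=q/2$ if $q$ is even, $k=(q-1)/2$ if $q$ is odd).

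It remains to treat the case $a=pb$ with $p\in\mathbb{N}$, $p\geq2$; after the substitution $v=bt$ this amounts to showing $\sup_{v}|\cos(pv)-\cos v|>1$. Here the trick above fails, because $\cos(pv)=1$ at every point where $\cos v=1$, so I choose points adapted to $p$: if $p$ is even, $v=\pi$ gives $|\cos(p\pi)-\cos\pi|=|1-(-1)|=2$; if $p$ is odd (hence $p\geq3$), then $v=\pi/p$ gives $\cos(pv)=\cos\pi=-1$ while $\cos(\pi/p)>0$, so that $|\cos(pv)-\cos v|=1+\cos(\pi/p)>1$.

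I expect this integer-ratio case, and especially the odd sub-case requiring the point $v=\pi/p$ rather than $v=\pi$, to be the main obstacle, since it is exactly where the uniform ``set one cosine equal to $1$'' strategy breaks down and a genuinely different choice of evaluation point is forced.
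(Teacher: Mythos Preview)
Your proof is correct and shares the paper's core idea: make one of the two cosines equal to $1$ at a well-chosen point and arrange for the other to be strictly negative there. The execution differs in two respects. First, you fix the \emph{slower} frequency ($\cos(bt_n)=1$ at $t_n=2\pi n/b$) and then need the fractional part of $n\,a/b$ in $(1/4,3/4)$; this forces the separate integer-ratio case you flagged, because $\{na/b\}\equiv0$ when $a/b\in\mathbb{N}$. The paper instead scales so that the larger frequency is $1$, fixes $\cos s=1$ at $s=2k\pi$, and seeks $\{k\alpha\}\in(1/4,3/4)$ with $\alpha=b/a\in(0,1)$; since $\alpha<1$ this never degenerates, so no extra case is needed. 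Second, you invoke equidistribution (Weyl/Kronecker) for irrational ratio and an explicit residue argument for rational ratio, whereas the paper gives a bare-hands dyadic construction covering all $\alpha\in(0,1)$ at once. Your route is slightly more case-heavy but entirely standard; the paper's is more elementary and uniform. Either way the argument goes through.
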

\begin{proof}
If $a=0$, the assertion is clear as $\cos(\pi)=-1$. Hence, let $a,b>0$.
By scaling, it suffices to prove that
\begin{equation*}
\forall a\in(0,1)\ \exists s\geq0:\quad |\cos(as)-\cos(s)|>1.
\end{equation*}
Since $\cos(2k\pi)=1$ for $k\in\mathbb{Z}$ and $\cos(as)<0$ for $t\in\frac{\pi}{a}(\frac{1}{2}+2m,\frac{3}{2}+2m)$, $m\in\mathbb{Z}$,  we are done if we find $(k,m)\in\mathbb{Z}\times\mathbb{Z}$ such that 
\begin{equation*}
	k\in\tfrac{1}{a}\left(\tfrac{1}{4}+m,\tfrac{3}{4}+m\right).
\end{equation*}
This is equivalent to $ka-m\in(\frac{1}{4},\frac{3}{4})$. It is easy to check that for $a\in(2^{-n-1},2^{-n}]\cup[1-2^{-n-1},1-2^{-n})$ we can choose $m=2^{n-1}$ and $k=\lfloor ma\rfloor$.
\end{proof}
%\subsection{Proof of Theorem \ref{mainresult}.2}
%	Since $\left(C(t)\right)_{t \geq 0}$ is a bounded cosine family on  a Hilbert space, it follows from Fattorini, \cite[Theorem 4.1]{Fattorini70cosHS}, that $\left(C(t) \right)_{t \geq 0}$ is similar to a cosine family of self-adjoint operators. Thus there exists a bounded operator $S$, which is (boundedly) invertible such that
%	\begin{equation*}
%	\tilde{C}(t)=S^{-1}C(t)S\quad \text{ and } \quad \tilde{C}(t)^{*}=\tilde{C}(t), 
%	\end{equation*}
%	for all $t\geq0$ with $\left(\tilde{C}(t)\right)_{t \geq 0}$ a cosine family of self-adjoint operators. Furthermore, the generator of $\left(\tilde{C}(t)\right)_{t \geq 0}$, $\tilde{A}$ is self-adjoint and $\tilde{A}=S^{-1}AS$. Since $\sigma(A) \subseteq \left\{-a^{2}\right\}$ by Lemma \ref{le:spectrum}, it follows that also $\sigma(\tilde{A}) \subseteq \left\{-a^{2}\right\}$. Thus, by spectral theory for self-adjoint operators, $\sigma(\tilde{A}) = \left\{-a^{2}\right\}$ and $\tilde{A}$ has to equal $-aI$. Therefore, $A=-a^{2}I$ and $C(t)=\cos(at)I$.

%\subsection{Proof of Theorem \ref{mainresult}}

As mentioned before we may assume that $a=1$, and thus we consider equation (\ref{eq:10}) and assume that $r<1$. Hence we know that the norm of the difference $e(t)=C(t)-\cos(t)$ is uniformly below one, and we want to show that it equals zero. 
The idea is to work on the following inequality
\begin{equation}
  \label{eq:14}
  \left\| \int_{0}^{\infty} h_n(q,t) e(t) dt \right\| \leq r \int_0^{\infty} |h_n(q,t)| dt,
\end{equation}
with $h_{n}(q,t)=e^{-qt} \cos(t)^{2n+1}$, $n \in {\mathbb N}$, where $q>0$ is an auxiliary variable to be dealt with later.

Since $\left(C(t)\right)_{t \geq 0}$ is bounded, it is well-known (see e.g.\ \cite[Lemma 5.4]{Fattorini69I}) that for $s$ with $\Re(s)>0$, $s^{2}\in\rho(A)$ and we can define $E(s)$ as the Laplace transform of $e(t)$, 
\begin{equation} \label{eq:13}
  E(s):= \int_{0}^{\infty}e^{-st}e(t)\ dt=s (s^2 I - A)^{-1} -\frac{s}{s^2 +1} I
\end{equation}
To calculate the left-hand side of (\ref{eq:14}) we need the following two results. We omit the proof of the first as it can be checked by reader easily.
\begin{lem}\label{le:FScosn}
		Let $n\in\mathbb{N}$. Then, for all $t\in\mathbb{R}$,
		\begin{align*}
		\cos(t)^{2n+1}=\sum_{k=0}^{n}a_{2k+1,2n+1}\cos\left( (2k+1)t\right),%\qquad a_{2k+1}=2^{-n+1}\binom{n}{\frac{n-1-2k}{2}}
		\end{align*}
		where $a_{2k+1,2n+1}=2^{-2n}\binom{2n+1}{n-k}$.
	\end{lem}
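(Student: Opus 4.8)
The plan is to expand $\cos(t)^{2n+1}$ into complex exponentials via the binomial theorem and then recombine conjugate pairs back into cosines. First I would write $\cos(t) = \tfrac{1}{2}\left( e^{it} + e^{-it} \right)$, so that
\begin{equation*}
\cos(t)^{2n+1} = \frac{1}{2^{2n+1}} \left( e^{it} + e^{-it} \right)^{2n+1} = \frac{1}{2^{2n+1}} \sum_{j=0}^{2n+1} \binom{2n+1}{j} e^{i(2n+1-2j)t}.
\end{equation*}

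The key structural point is that the sum runs over an \emph{even} number of terms, $2n+2$, precisely because $2n+1$ is odd; hence every exponent $2n+1-2j$ is a nonzero odd integer and no unpaired ``middle'' term survives (this is exactly what distinguishes the odd-power case treated here from the even-power case). I would then pair the term indexed by $j$ with the one indexed by $2n+1-j$: using the symmetry $\binom{2n+1}{j} = \binom{2n+1}{2n+1-j}$ together with $e^{i\theta t} + e^{-i\theta t} = 2\cos(\theta t)$, each such pair contributes $2\binom{2n+1}{j}\cos\left((2n+1-2j)t\right)$. Letting $j$ range only over $0,1,\dots,n$ counts every pair exactly once, so that
\begin{equation*}
\cos(t)^{2n+1} = \frac{1}{2^{2n}} \sum_{j=0}^{n} \binom{2n+1}{j} \cos\left( (2n+1-2j)t \right).
\end{equation*}

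Finally I would reindex by setting $k = n - j$, so that $j = n-k$ again runs over $0,\dots,n$, the exponent becomes $2n+1-2(n-k) = 2k+1$, and the coefficient becomes $\binom{2n+1}{n-k}$. This gives exactly
\begin{equation*}
\cos(t)^{2n+1} = \sum_{k=0}^{n} 2^{-2n}\binom{2n+1}{n-k} \cos\left( (2k+1)t \right),
\end{equation*}
which is the claimed identity with $a_{2k+1,2n+1} = 2^{-2n}\binom{2n+1}{n-k}$. As the paper remarks, the computation is entirely elementary; the only point requiring any care is the bookkeeping of the index shift and the accompanying power of two (the pairing is what converts the prefactor $2^{-2n-1}$ into $2^{-2n}$), and there is no genuine analytic obstacle. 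An equally valid route would be induction on $n$ via the product-to-sum formula $2\cos(\alpha)\cos(\beta) = \cos(\alpha+\beta) + \cos(\alpha-\beta)$, but the exponential argument above makes the closed form of the coefficients most transparent.
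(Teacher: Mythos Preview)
Your argument is correct: the binomial expansion of $(e^{it}+e^{-it})^{2n+1}$, the pairing of conjugate terms, and the reindexing $k=n-j$ are all carried out accurately and yield exactly the stated coefficients $a_{2k+1,2n+1}=2^{-2n}\binom{2n+1}{n-k}$. The paper itself omits the proof entirely (``we omit the proof of the first as it can be checked by the reader easily''), so there is nothing to compare against; your exponential computation is the standard and most transparent verification.
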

\begin{prop}\label{prop:rep}
For 	$h_n(q,t)= -2e^{-qt} \cos(t)^{2n+1}$ and $q>0$ we have
\begin{equation*}
\int_0^{\infty} h_{n}(q,t) e(t) dt=a_{1,2n+1}\frac{g(q)}{q}I+a_{1,2n+1}B(A,q)+G(A,q),
\end{equation*}
where $a_{n}$ as in Lemma \ref{le:FScosn}, $g(q)=\frac{2q^2 +4}{(q^2+4)}$,
\begin{equation*}
B(A,q)=R\left((q+i)^{2},A\right)\ 2q\left[A-(q^{2}+1)I\right]R\left((q-i)^{2},A\right),
\end{equation*}
$R(\lambda,A) = (\lambda I - A)^{-1}$, and $G(A,q)$ is such that $\lim_{q\to0^{+}}q\cdot G(A,q)=0$ in the operator norm.
\end{prop}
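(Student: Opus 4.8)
The plan is to expand $\cos(t)^{2n+1}$ via Lemma~\ref{le:FScosn} and then integrate term by term. Writing $\cos(mt)=\tfrac12(e^{imt}+e^{-imt})$ and using that $\Re(q\pm im)=q>0$, each summand turns into a combination of Laplace transforms, namely $\int_0^\infty e^{-qt}\cos(mt)e(t)\,dt=\tfrac12\bigl(E(q-im)+E(q+im)\bigr)$, which is explicit through \eqref{eq:13}. Collecting the factor $-2$ in the definition of $h_n(q,t)$, this reduces the whole computation to
\begin{equation*}
\int_0^\infty h_n(q,t)e(t)\,dt=-\sum_{k=0}^n a_{2k+1,2n+1}\bigl(E(q-i(2k+1))+E(q+i(2k+1))\bigr),
\end{equation*}
so that everything hinges on analysing the single summand $E(q-im)+E(q+im)$ for odd $m$.

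Next I would treat the term $k=0$, i.e.\ $m=1$, which is responsible for the first two contributions in the claimed formula. Abbreviating $s_\pm=q\pm i$, the scalar parts of $E(s_\pm)$ sum to $\frac{s_+}{s_+^2+1}+\frac{s_-}{s_-^2+1}=\frac{1}{q}\,\frac{2q^2+4}{q^2+4}=\frac{g(q)}{q}$, which after multiplication by $-a_{1,2n+1}$ yields exactly the term $a_{1,2n+1}\frac{g(q)}{q}I$. For the resolvent parts I would establish the identity $-\bigl(s_+R((q+i)^2,A)+s_-R((q-i)^2,A)\bigr)=B(A,q)$. Splitting $s_\pm R((q\pm i)^2,A)=qR((q\pm i)^2,A)\pm iR((q\pm i)^2,A)$ and applying the resolvent identity $R((q+i)^2,A)-R((q-i)^2,A)=-4iq\,R((q+i)^2,A)R((q-i)^2,A)$ rewrites the left-hand side purely through $R((q\pm i)^2,A)$ and their product; expanding $B(A,q)$ by splitting $A-(q^2+1)I$ symmetrically and using $(A-\lambda I)R(\lambda,A)=-I$ produces the very same expression. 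This algebraic matching, which I expect to be the main obstacle, turns out to be \emph{exact}, so the $k=0$ summand contributes precisely $a_{1,2n+1}\bigl(\frac{g(q)}{q}I+B(A,q)\bigr)$ and leaves no remainder.

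Finally I would collect the surviving summands into
\begin{equation*}
G(A,q)=-\sum_{k=1}^n a_{2k+1,2n+1}\bigl(E(q-i(2k+1))+E(q+i(2k+1))\bigr)
\end{equation*}
and verify the asymptotic statement. For $m=2k+1\ge3$ one has $(q\pm im)^2\to-m^2$ as $q\to0^+$, and since $\sigma(A)\subseteq\{-1\}$ by Lemma~\ref{le:spectrum} and $-m^2<-1$, the point $-m^2$ lies in $\rho(A)$; hence $R((q\pm im)^2,A)$ stays bounded near $q=0$ by continuity of the resolvent, while $\frac{q\pm im}{(q\pm im)^2+1}$ remains finite because $m^2\neq1$. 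Consequently each $E(q\pm im)$ is bounded as $q\to0^+$, so $q\cdot G(A,q)\to0$ in operator norm, which completes the verification.
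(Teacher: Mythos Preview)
Your proof is correct and follows essentially the same route as the paper's: expand via Lemma~\ref{le:FScosn}, split off the $k=0$ term of $-\sum_{k} a_{2k+1,2n+1}\bigl[E(q+i(2k+1))+E(q-i(2k+1))\bigr]$ to produce $\frac{g(q)}{q}I$ and $B(A,q)$, and control the $k\ge1$ tail through Lemma~\ref{le:spectrum}. The only cosmetic differences are that you verify the identity $-(s_+R((q+i)^2,A)+s_-R((q-i)^2,A))=B(A,q)$ via the first resolvent equation rather than by direct left/right factoring as in the paper, and that your sign bookkeeping for the scalar part of $E(s)$ (which is $-\tfrac{s}{s^2+1}$, not $+\tfrac{s}{s^2+1}$) contains two slips that happen to cancel.
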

\begin{proof}
By Lemma \ref{le:FScosn}, we have that
\begin{align*}
\int_0^{\infty} h_n(q,t) e(t) dt={}&-\sum_{k=0}^{n}a_{2k+1,2n+1}\ 2\int_{0}^{\infty}e^{-qt}\cos\left( (2k+1)t\right)e(t)\ dt\\					={}&-\sum_{k=0}^{n}a_{2k+1,2n+1}\left[E(q+(2k+1)i)+E(q-(2k+1)i)\right].
\end{align*}
Let us first consider the term in the sum corresponding to $k=0$. By (\ref{eq:13}), 
\begin{align}
	E(q\pm i)=\frac{q\pm i}{q(q\pm 2i)}\left[q(q\pm 2i)R((q\pm i)^{2})-I\right],
\end{align}
where $R(\lambda)$ abbreviates $R(\lambda,A)$. Hence, 
\begin{align*}
	E(q+i)+E(q-i)={}&-\frac{g(q)}{q}+(q+i)R((q+i)^{2})+(q-i)R((q-i)^{2})\\
			={}&-\frac{g(q)}{q}+R((q+i)^{2})\left[(q+i)((q-i)^{2}-A)+\right.\\
			{}&+\left.((q-i)^{2}-A)(q-i)\right]R((q-i)^{2})\\
			={}&-\frac{g(q)}{q}+R((q+i)^{2})2q\left[q^{2}I+I-A\right]R((q-i)^{2})\\
			={}&-\frac{g(q)}{q}-B(A,q).
\end{align*}
Thus, it remains to show that $qG(A,q)$ with
\begin{equation*}
	G(A,q):=-\sum_{k=1}^{n}a_{2k+1,2n+1}\left[E(q+(2k+1)i)-E(q-(2k+1)i)\right]
\end{equation*}
goes to $0$ as $q\to0^{+}$. Let $d_{k}=(2k+1)i$. By (\ref{eq:13}), %and since for $s\in\mathbb{C}^{+}$, $s^{2}\in\mathbb{C}\setminus\mathbb{R}_{0}^{-}\subset\rho(A)$,
\begin{align*}
	E(q\pm(2k+1)i)={}& (q\pm d_{k})R((q\pm d_{k})^{2})-\frac{q\pm d_{k}}{(q\pm d_{k})^{2}+1}I.
\end{align*}
Thus, since $d_{k}^{2}\in\rho(A)$ for $k\neq0$ by Lemma \ref{le:spectrum}, 
\begin{equation*}
	\lim_{q\to0^{+}}E(q\pm(2k+1)i)=\pm d_{k}R(d_{k}^{2})\pm\frac{d_{k}}{d_{k}^{2}+1},
\end{equation*}
 for $k\neq0$, hence, $\lim_{q\to0^{+}}q\cdot G(A,q)=0$. Therefore, the assertion follows.
\end{proof}
\begin{lem} \label{le:bn}
	For any $n\in\mathbb{N}$ and $a_{1,2n+1}$ chosen as in Lemma \ref{le:FScosn} holds that 
	\begin{itemize}
		\item $b_{n}:=\lim_{q\to0^{+}}q\cdot \int_{0}^{\infty}e^{-qt}|\cos(t)^{n}|\ dt$ exists and $b_{n}\geq b_{n+1}$,
		% =  \frac{1}{\pi}B\left(\frac{1}{2},\frac{n+1}{2}\right),%=\frac{\left(\frac{n-1}{2}\right)!}{\sqrt{\pi}\cdot \Gamma(\frac{1}{2}+\frac{n+1}{2})}
		\item $a_{1,2n+1}=2b_{2n+2}$,
		\item
		$
			\lim_{n\to\infty}\frac{a_{1,2n+1}}{2b_{2n+1}}=1.
		$
		\end{itemize}
	\end{lem}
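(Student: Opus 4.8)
The plan is to reduce all three statements to the mean value of the periodic function $|\cos|^{n}$. Since $|\cos(t)|^{n}$ has period $\pi$, I would split the integral over consecutive periods and sum the resulting geometric series: for $q>0$,
\begin{equation*}
q\int_{0}^{\infty}e^{-qt}|\cos(t)|^{n}\,dt=\frac{q}{1-e^{-q\pi}}\int_{0}^{\pi}e^{-qs}|\cos(s)|^{n}\,ds.
\end{equation*}
Letting $q\to0^{+}$, the prefactor tends to $1/\pi$ while the integral tends to $\int_{0}^{\pi}|\cos(s)|^{n}\,ds$ by dominated convergence (the integrand is bounded by $1$ on $[0,\pi]$), so
\begin{equation*}
b_{n}=\frac{1}{\pi}\int_{0}^{\pi}|\cos(t)|^{n}\,dt=\frac{2}{\pi}\int_{0}^{\pi/2}\cos^{n}(t)\,dt.
\end{equation*}
This establishes existence of the limit, and the monotonicity $b_{n}\geq b_{n+1}$ in the first claim is then immediate from the pointwise inequality $|\cos(t)|^{n+1}\leq|\cos(t)|^{n}$.

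For the second claim I would compute $b_{2n+2}$ explicitly. As the exponent is even the absolute value is irrelevant, and the Wallis integral gives $\int_{0}^{\pi/2}\cos^{2m}(t)\,dt=\frac{\pi}{2}2^{-2m}\binom{2m}{m}$. Taking $m=n+1$ yields $b_{2n+2}=2^{-(2n+2)}\binom{2n+2}{n+1}$, and the elementary identity $\binom{2n+2}{n+1}=2\binom{2n+1}{n}$ turns this into $2b_{2n+2}=2^{-2n}\binom{2n+1}{n}=a_{1,2n+1}$, with $a_{1,2n+1}$ exactly as in Lemma~\ref{le:FScosn}.

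For the third claim I would exploit the Wallis recursion. Integration by parts gives $\int_{0}^{\pi/2}\cos^{k}(t)\,dt=\frac{k-1}{k}\int_{0}^{\pi/2}\cos^{k-2}(t)\,dt$, hence $b_{k}=\frac{k-1}{k}b_{k-2}$ and in particular $b_{2n+2}/b_{2n}=\frac{2n+1}{2n+2}$. Using the second claim to rewrite $\frac{a_{1,2n+1}}{2b_{2n+1}}=\frac{b_{2n+2}}{b_{2n+1}}$ and combining the monotonicity $b_{2n+2}\leq b_{2n+1}\leq b_{2n}$ with the recursion, I would sandwich
\begin{equation*}
\frac{2n+1}{2n+2}=\frac{b_{2n+2}}{b_{2n}}\leq\frac{b_{2n+2}}{b_{2n+1}}\leq1,
\end{equation*}
so the ratio converges to $1$ by squeezing.

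I do not expect a serious obstacle: the only genuinely nontrivial ingredient is the Abelian limit identifying $b_{n}$ with the mean of $|\cos|^{n}$ over a period, which is classical. The remaining care is in the binomial bookkeeping for the second claim and in keeping the parities consistent when applying monotonicity together with the recursion in the third.
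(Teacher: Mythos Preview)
Your proof is correct and follows essentially the same route as the paper: both identify $b_{n}=\frac{1}{\pi}\int_{0}^{\pi}|\cos t|^{n}\,dt$ via the geometric-series/periodicity trick, and both obtain the third item by sandwiching $b_{2n+2}\le b_{2n+1}\le b_{2n}$ together with $b_{2n+2}/b_{2n}\to1$. The only cosmetic difference is in the second item, where you quote the explicit Wallis formula and a binomial identity, whereas the paper reads off $a_{1,2n+1}$ directly as the $\cos(t)$ Fourier coefficient of $\cos^{2n+1}(t)$ via Lemma~\ref{le:FScosn} and orthogonality.
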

\begin{proof}
	Because $t\mapsto |\cos(t)^{n}|$ is $\pi$-periodic, %a well-known property of the Laplace transform yields
		\begin{equation*}
		q\int_{0}^{\infty}e^{-qt}|\cos(t)^{n}|\ dt =\frac{q\int_{0}^{\pi}e^{-qt}|\cos(t)^{n}|dt}{1-e^{-q\pi}},
		\end{equation*}
		which goes to $\frac{1}{\pi} \int_{0}^{\pi}|\cos(t)^{n}|dt$ as $q\to0^{+}$. Furthermore,
		\begin{align*}
			2_{b_{2n+2}}=\frac{2}{\pi} \int_{0}^{\pi}|\cos(t)^{2n+2}|dt=\frac{1}{\pi} \int_{0}^{2\pi}\cos(t)^{2n+1}\cos(t)dt	
			%={}&\frac{1}{\pi}B\left(\frac{1}{2},\frac{n+1}{2}\right).
		\end{align*}
		equals $a_{1,2n+1}$ by the Fourier series of $\cos(t)^{2n+1}$, see Lemma \ref{le:FScosn}.\newline By the same lemma we have that for $n \geq 1$
		\begin{equation*}	\frac{a_{1,2n-1}}{a_{1,2n+1}}=\frac{2^{-2n+2}\binom{2n-1}{n}}{2^{-2n}\binom{2n+1}{n}}=\frac{(2n+1)2n}{4(n+1)n},
		\end{equation*}
		which goes to $1$ as $n\to\infty$. This implies that $\frac{a_{1,2n+1}}{2b_{2n+1}}$ goes to $1$ as
		\begin{equation*}
		a_{1,2n+1}=2b_{2n+2}\leq 2b_{2n+1} \leq 2b_{2n} = a_{1,2n-1}, \quad n\in\mathbb{N}.
		\end{equation*}
	\end{proof}

\begin{proof}[Proof of Theorem \ref{mainresult}]
	Let $r=1-2\varepsilon$ for some $\varepsilon>0$. By Lemma \ref{le:bn} we can choose $n\in\mathbb{N}$ such that 
		\begin{equation}\label{eq:18}
		r\frac{2b_{2n+1}}{a_{1,2n+1}}<1-\varepsilon.
		\end{equation}
Let us abbreviate $a_{1,2n+1}$ by $a_{2n+1}$. By (\ref{eq:14}) and Proposition \ref{prop:rep}, we have that
	\begin{equation*}
	\left\|a_{2n+1}\frac{g(q)}{q}I+a_{2n+1}B(A,q)+G(A,q)\right\| \leq 2 r \int_0^{\infty} e^{-q t} |\cos(t)^{2n+1}| dt, 
	\end{equation*}
	hence,
	\begin{equation*}
	\left\|I+\frac{q}{g(q)}\left(B(A,q)+\frac{1}{a_{2n+1}}G(A,q)\right)\right\| \leq  \frac{2rq}{g(q)a_{2n+1} }\int_0^{\infty} e^{-q t} |\cos(t)^{2n+1}| dt, 
	\end{equation*}
	For $q\to0^{+}$, $g(q)\to1^{+}$, $qG(A,q)\to0$ by Proposition \ref{prop:rep} and by Lemma \ref{le:bn}, $q\int_{0}^{\infty}e^{-qt}|\cos(t)^{2n+1}|dt\to b_{2n+1}$. Thus, there exists $q_{0}>0$ (depending only on $\varepsilon$ and $n$) such that 
\begin{equation*}
	\left\|I+\frac{q}{g(q)}B(A,q)\right\| \leq r \frac{2b_{2n+1}}{a_{2n+1}}+\varepsilon=:\delta,\qquad \forall q\in(0,q_{0}),
	\end{equation*}
	Since $\delta<1$ by (\ref{eq:18}), $B(A,q)$ is invertible for $q\in(0,q_{0})$. Moreover,
	\begin{equation*}
		\|B(A,q)^{-1}\| \leq\frac{q}{g(q)}\cdot\frac{1}{1-\delta}.
	\end{equation*}
	Since
	\begin{align*}
	B(A,q)^{-1}={}&\frac{1}{2}((q-i)^{2}-A)q^{-1}\left[A-(q^{2}+1)I\right]^{-1}((q+i)^{2}-A),%\\
				%={}&-\frac{1}{2q}((q-i)^{2}-A)R(q^{2}+1,A)((q+i)^{2}-A),
	\end{align*}
	we conclude that 
	\begin{equation*}
	\|((q-i)^{2}-A)R(q^{2}+1,A)((q+i)^{2}-A)\|\leq \frac{q^{2}}{g(q)}\cdot\frac{2}{1-\delta}.
	\end{equation*}
	As $q\to0^{+}$, the right-hand-side goes to $0$, whereas the left hand side tends to $(I + A) 
    \left[ A -I \right]^{-1}(I + A)$ as $1\in\rho(A)$. Therefore, $A=-I$.
\end{proof}
	\begin{rem}
	We would like to address the open question whether condition (\ref{eq:10a1}) can be by replaced by
	\begin{equation*}
		\limsup_{t\to\infty}\|C(t)-\cos(at)\|=r,
	\end{equation*}
	for some $r<1$ (or even some constant larger than $1$). Even for the classical semigroup case this seems not so clear.
	\end{rem}


\begin{thebibliography}{9}

\bibitem{UlmerSeminare2012}
 W.~Arendt.
\newblock{\em A $0-3/2$ - {L}aw for {C}osine {F}unctions}. 
\newblock{ Ulmer Seminare, Funktional\-analysis und Evolutionsgleichungen}, 17:349--350, 2012. 

\bibitem{ABHN}
W.~Arendt, C.~J.~K. Batty, M.~Hieber, and F.~Neubrander.
\newblock {\em Vector-valued {L}aplace transforms and {C}auchy problems},
 Vol. 96, {Monographs in Mathematics}.
\newblock Birkh\"auser Verlag, Basel, 2001.

\bibitem{BobrowskiApprox}
A.~Bobrowski and W.~Chojnacki.
\newblock{\em  Isolated points of the set of bounded cosine families, bounded
 semigroups, and bounded groups on a {B}anach space.}
\newblock{ Studia Mathematica}, 217(3), 219-- 241, 2013.

\bibitem{BobrowskiZeroTwo}
A.~Bobrowski, W.~Chojnacki and A. Gregosiewicz.
\newblock{\em On close-to-scalar one-parameter cosine families.} Submitted.
%\newblock{\em J. Math. Anal. Appl.}, 2015.

\bibitem{ChojnackiZeroTwo}
W.~Chojnacki.
\newblock{\em Around Schwenninger and Zwart's Zero-two law for cosine families.} Submitted.
%\newblock{\em Studia Mathematica},
\bibitem{ChojnackiOneTwo}
W.~Chojnacki.
\newblock{\em On cosine families close to scalar cosine families. }
\newblock{ To appear in J. Aust. Math. Soc., arXiv:1411.0854.}

\bibitem{Esterle2015}
J.~Esterle.
\newblock{\em Bounded cosine functions close to continuous scalar bounded cosine functions.}
{\em arXiv:1502.00150.}

\bibitem{Fattorini69I}
H.~O. Fattorini.
\newblock{\em  Ordinary differential equations in linear topological spaces. {I}.}
\newblock { J. Differential Equations}, 5:72--105, 1969.

%\bibitem{Fattorini70cosHS}
%H.~O. Fattorini.
%\newblock{\em  Uniformly bounded cosine functions in {H}ilbert space.}
%\newblock {\ Indiana Univ. Math. J.}, 20:411--425, 1970/1971.

\bibitem{Hirschfeld}
R.~A. Hirschfeld.
\newblock{\em  On semi-groups in Banach algebras close to the identity.}
\newblock { Proc. Japan. Acad.}, 44:755, 1968.

\bibitem{Nagy74Sz}
B.~Nagy.
\newblock{\em On cosine operator functions in {B}anach spaces.}
\newblock { Acta Sci. Math. (Szeged)}, 36:281--289, 1974.

\bibitem{SchwenningerZwartZeroTwo}
F.~L.~Schwenninger, H.~Zwart.
\newblock{\em Zero-two law for cosine families.}
\newblock{ To appear in J. Evolution Equations}, 2015. {\em arXiv: 1402.1304.}
\bibitem{Staffans}
O.~Staffans.
\newblock{\em Well-posed linear systems}. Vol. 103,
 Encyclopedia of Mathematics and its Applications. 
\newblock Cambridge University Press, Cambridge, 2005.

\bibitem{Wallen}
L.~J. Wallen
\newblock{\em On the Magnitude  of $x^{n}-1$ in a Normed Algebra.}
\newblock { Proc. Amer. Soc.},18:956, 1967.


\end{thebibliography}
\end{document}